\documentclass[11pt]{article}

\usepackage{mathrsfs}
\usepackage{amsfonts}
\usepackage{amssymb}
\usepackage{amsmath}
\usepackage{color}
\usepackage{multicol}
\usepackage{dsfont}

\usepackage{algorithmic}
\usepackage{array}
\usepackage[caption=false,font=footnotesize,labelfont=rm,textfont=rm]{subfig}
\usepackage{textcomp}
\usepackage{url}
\usepackage{verbatim}
\usepackage{graphicx}
\graphicspath{{figures/}} 
\usepackage[table]{xcolor}
\usepackage{booktabs}
\usepackage{epstopdf}
\usepackage{tabularx}

\newtheorem{theorem}{Theorem}[section]

\numberwithin{equation}{section}

\newtheorem{remark}[theorem]{Remark}
\newenvironment{proof}[1][Proof]{\textbf{#1.}}{\ \rule{0.5em}{0.5em}}

\begin{document}
	\parindent 9mm
	
	\title{Stabilization for a Cascaded ODE-Wave Equation with Boundary Nonlinear Disturbances \thanks{This work was supported by the Natural Science Foundation of Shaanxi Province, China (grant no. 2023-JC-YB-044), and the Fundamental Research Funds for the Central Universities (grant no. xzy012024070).}}
	
	\author{ Zhan-Dong Mei\thanks{Corresponding author. Email: zhdmei@mail.xjtu.edu.cn }, Lan-Xi Tang
		\\{\it  \small  School of Mathematics and Statistics, Xi'an Jiaotong University, Xi'an 710049, China }
	}
	
	\date{}
	\maketitle \thispagestyle{empty}
	
	\begin{abstract}
		In this article, we investigate the problem of exponential stabilization via output feedback for a cascaded system composed of an ordinary differential equation (ODE) and a wave partial differential equation (PDE) under boundary control. Four types of boundary interconnections are considered. In the absence of disturbances, a novel transformation is introduced to incorporate the PDE boundary control input into the ODE subsystem. Based on this transformation, a state feedback controller is designed to achieve exponential stability for both the ODE and PDE components.
		When internal uncertainties and external disturbances that match the control structure are present, a disturbance estimator is constructed. Utilizing this estimator, a Luenberger-type state observer is developed to reject the disturbances and exponentially stabilize the original system via an observer-based control scheme. Furthermore, the boundedness of the entire closed-loop system is rigorously established.

		\vspace{0.5cm} 
		
		\noindent {\bf Key words:} Wave equation; cascaded ODE-wave equation;  exponential stabilization; disturbance estimator.
	\end{abstract}
	
	\section{Introduction}
	~~~~~As is well known, in engineering practice, many physical systems can be modeled as ordinary differential equation (ODE)-partial differential equation (PDE) cascaded systems. These ODE-PDE models play a fundamentally important role in a wide range of practical applications, such as electromagnetic coupling \cite{Krstic2008b}, 3D printing [22], industrial oil-drilling plants \cite{Bekiaris-Liberis2014,Sagert2013,WangJ2020}, cable elevators \cite{WangJ2018}, transport phenomena in gasoline engines \cite{Chebre2010,Jankovic2011}, metal cutting processes \cite{Herty2009}, and traffic flow \cite{Hasan2016,Yu2021}. Consequently, the control of ODE-PDE cascaded systems has attracted considerable attention. However, due to their infinite-dimensional nature, it is challenging to directly apply existing finite-dimensional control theories and techniques to such systems.
	In \cite{Krstic2008b}, the backstepping control approach was applied to an ODE-PDE cascaded system governed by a one-dimensional continuum model, highlighting the potential of PDE backstepping for a variety of physical systems. Since then, this method has been widely adopted to stabilize ODE-PDE cascaded systems--particularly those involving ODEs actuated by first-order hyperbolic PDEs \cite{Di2018,Krstic2009a,Wang2011}, the heat equation \cite{Tang2011,Tang2011a,ZhouZC2015}, the Schr\"{o}dinger equation \cite{Ren2013}, and the wave equation \cite{LiRC2021,Zhen2016,ZhouZC2012}.
	Nevertheless, applying the PDE backstepping approach to an Euler-Bernoulli beam (EBB) remains difficult, except in special cases where the EBB equation with appropriate boundary conditions can be transformed into a Schr?dinger equation \cite{Smyshlyaev2009a}. To address this limitation, Wu and Feng \cite{Wu2022} recently introduced a novel transformation to establish exponential stability for an ODE-EBB system with two interconnections.
	
	The aforementioned ODE-PDE models did not account for external disturbances. However, uncertainties arising from modeling inaccuracies and external environmental factors make the presence of disturbances inevitable. Over the past three decades, significant research attention has been directed toward the stabilization of PDEs under disturbances. Various control strategies have been developed to address such challenges, including the Lyapunov functional approach for the Euler-Bernoulli beam (EBB) equation \cite{Jin2015}, sliding mode control (SMC) for the wave equation \cite{Guo2013a} and the EBB equation \cite{Guo2013b}, as well as active disturbance rejection control (ADRC) applied to the wave equation \cite{Guo2013a} and beam equation \cite{Guo2013b}.
	For ODE-PDE cascaded systems subject to disturbances, the combination of SMC and PDE backstepping has been successfully implemented in several configurations, such as ODE-heat equations \cite{Wang2015}, ODE-Schr\"{o}dinger equations \cite{Liu2018a}, and ODE-wave equations \cite{Zhou2016a}. Similarly, ADRC integrated with PDE backstepping has been employed for ODE-Schr\"{o}dinger equations \cite{GuoYP2015}, ODE-heat equations \cite{Guo2015a}, and ODE-wave equations \cite{Liu2017}. Nevertheless, these ADRC designs predominantly rely on finite-dimensional extended state observers (ESOs), which necessitate high gains and assume bounded disturbances.
	To overcome these limitations, Feng and Guo \cite{Feng2017a} proposed a novel ADRC framework by introducing an infinite-dimensional disturbance estimator in place of a finite-dimensional ESO, achieving stabilization for an anti-stable wave equation. Following a similar approach, subsequent studies have addressed the stabilization of PDEs and ODE-PDE cascaded systems, including wave equations \cite{Mei2020a,Zhou2018b}, beam equations \cite{Mei2020b,Zhou2018a}, ODE-Schr\"{o}dinger \cite{Jia2020a}, ODE-heat equations \cite{Jia2018a,Liu2021}, and ODE-wave equation \cite{Mei2023a}.

	In this paper, we address the problem of exponential stabilization via output feedback for a cascaded system composed of an ODE and a wave PDE under boundary control, subject to matched internal uncertainties and external disturbances. The system is described as follows:
	\begin{equation} \label{beem}
		\left\{\begin{array}{l}
			\dot{X}(t)=AX(t)+B_1w(0,t)+B_2w_t(0,t)\\
			+B_3w(1,t)+B_4w_t(1,t), \; t>0, \\
			w_{tt}(x,t)=w_{xx}(x,t),\;\; x\in (0,1), \; t>0, \\
			w_x(0,t)=0, \; \;   t\ge 0,\\
			w_{x}(1,t)=u(t)+f(w(\cdot,t),w_t(\cdot,t))+d(t), \;\;  t\ge 0, \\
			y(t)=\{w_t(0,t),w(1,t),CX(t)\}, \; \;  t\ge 0,\\
		\end{array}\right.
	\end{equation}	
	where $X\in \mathds{R}^{n\times 1}$ and $w\in L^2(0,1)$  denote the states of the ODE and the wave equation, respectively, $A\in \mathds{R}^{n\times n}$,
	$u(t)$ is the control input applied at $x=1$,
	$B_j\in \mathds{R}^{n\times 1}, j=1,2,3,4$ represent the coupling coefficients, $C$ is the output matrix for the ODE,
	$f(w(\cdot,t),w_t(\cdot,t))$ is nonlinear interior uncertainty and $d(t)$ is the external disturbance, $y(t)$ is the output signal (measurement).
	For notational convenience, the total disturbance is denoted as $F(t):=f(w(\cdot,t),w_t(\cdot,t))+ d(t)$. Throughout this work, we assume that the pair
	$(A,C)$ is observable and $H$ is a matrix such that $A+HC$ is a Hurwitz matrix.

	The dynamic stabilization of system (\ref{beem}) with only one interconnection (i.e., when $B_2=B_3=B_4=0$) has been studied in \cite{Liu2017,Zhou2016a}. In those works, the authors applied two PDE backstepping transformations and designed a sliding mode observer for state estimation. It should be emphasized that, even in the absence of disturbance ($F(t)=0$), the stabilization problem for the case of four interconnections has not been previously addressed. In this paper, we further aim to achieve both disturbance rejection and stabilization of the original system under disturbance. The main contributions of this work are summarized as follows:
	
	1) A novel transformation is proposed to incorporate the wave PDE boundary control input into the ODE subsystem, thereby enabling simultaneous stabilization of both the ODE and PDE parts. This approach significantly differs from the Volterra transformation used in PDE backstepping methods reported in \cite{Liu2017,Zhou2016a}.
	
	2) In contrast to existing studies such as \cite{LiRC2021,Liu2017,Mei2023a,Zhen2016,Zhou2016a}, which only consider a single interconnection, our work addresses the more challenging scenario involving four interconnections.
	
	3) The system under consideration (\ref{beem}) incorporates both internal uncertainties and external disturbances, whereas the systems studied in \cite{Liu2017,Zhou2016a} assumed $B_2=B_3=B_4=0$ and only considered external disturbances. It is worth noting that internal uncertainties also play a critical role, as highlighted in \cite{Zhou2018a,Zhou2018b}.
	
	4) The methods in \cite{Liu2017} rely on state feedback, high-gain designs, and boundedness assumptions on the derivative of the disturbance. Meanwhile, \cite{Zhou2016a} requires high-gain feedback, bounded disturbance derivatives, and four measurements: $w_t(0,t)$, $w(1,t)$, $w_t(1,t)$, and $CX(t)$. In contrast, our approach avoids high-gain designs, does not assume bounded disturbance derivatives, uses only output feedback, and requires only three measurements: $w_t(0,t)$, $w(1,t)$, and $CX(t)$.

	We proceed as follows. In Section 2, we consider system (\ref{beem}) in the absence of disturbance. A novel transformation is introduced to incorporate the wave PDE boundary control input into the ODE subsystem. Based on this transformation, a state feedback controller is designed to stabilize the PDE and track the total disturbance. In Section 3, for the case where $F(t) \neq 0$, we develop an infinite-dimensional disturbance estimator to reconstruct the total disturbance. Using this estimator, we subsequently construct a state observer and an observer-based output feedback controller. The exponential stability of the closed-loop system and boundedness of all signals are rigorously established. Finally, Section 4 provides numerical simulations to illustrate the theoretical results.

	\section{Stabilization in absence of disturbance ($F(t)=0$)}

~~~~This section addresses the problem of state feedback exponential stabilization for system (\ref{beem}) in the absence of disturbance, i.e., when $F(t) = 0$.
The analysis is conducted within the Hilbert state space $\mathbb{H}_1 = H^1(0,1) \times L^2(0,1)$, equipped with its inner product-induced norm described as follows
\begin{align*}
	\|(f,g)\|^2_{\mathbb{H}_1}=\int_0^1[|f'(x)|^2+|g(x)|^2]dx+\alpha |f(1)|^2,\forall \; (f,g)\in \mathbb{H}_1,
\end{align*}
where $\alpha > 0$ is a constant. To this end, we propose a new transformation for the ODE-wave coupled system as follows:
\begin{align}\label{IjiaP}
	\big(Y(t),w(\cdot,t),w_t(\cdot,t)\big)=(I+\mathbb{P})\big(X(t),w(\cdot,t),w_t(\cdot,t)\big),
\end{align}
where $I$ is the identity operator on $\mathds{R}^n\times \mathbb{H}_1$,
\begin{align}\label{PP}
	\nonumber &\mathbb{P}\big(X(t),w(\cdot,t),w_t(\cdot,t)\big)=\bigg(L_3w(0,t)+L_4w(1,t)\\
	&+\int_0^1 L_1(x)w(x,t)dx+\int_0^1L_2(x)w_t(x,t)dx,0,0\bigg),
\end{align}
where $L_1,L_2:[0,1]\rightarrow \mathds{R}^n$ are vector functions and $L_3,L_4$ are vectors to be determined ($(L_1,L_2\in L^2(0,1;\mathds{R}^n))$.
Observe that $\sqrt{\alpha}|w(1,t)|\leq \big\|\big(X(t),w(\cdot,t),w_t(\cdot,t)\big)\big\|_{\mathds{R}^n\times \mathbb{H}_1}$,
\begin{align*}
	&|w(0,t)|\leq (1/\sqrt{\alpha}+1)\big\|\big(w(\cdot,t),w_t(\cdot,t)\big)\big\|_{\mathbb{H}_1}\\
	&\leq (1/\sqrt{\alpha}+1)\big\|\big(X(t),w(\cdot,t),w_t(\cdot,t)\big)\big\|_{\mathds{R}^n\times \mathbb{H}_1},\\
	&\left\|\int_0^1 L_1(x)w(x,t)dx\right\|_{\mathds{R}^n}\\
	&\leq \left(\int_0^1 \|L_1(x)\|^2_{\mathds{R}^n}dx\right)^{1/2}\left(\int_0^1|w(x,t)|^2dx\right)\\
	&\leq \left(\int_0^1 \|L_1(x)\|^2_{\mathds{R}^n}dx\right)^{1/2}\big\|\big(w(\cdot,t),w_t(\cdot,t)\big)\big\|_{\mathbb{H}_1},\\
	&\left\|\int_0^1 L_2(x)w_t(x,t)dx\right\|_{\mathds{R}^n}\\
	&\leq \left(\int_0^1 \|L_2(x)\|^2_{\mathds{R}^n}dx\right)^{1/2}\left(\int_0^1|w_t(x,t)|^2dx\right)\\
	&\leq  \left(\int_0^1 \|L_2(x)\|^2_{\mathds{R}^n}dx\right)^{1/2}\big\|\big(w(\cdot,t),w_t(\cdot,t)\big)\big\|_{\mathbb{H}_1}.
\end{align*}
Observe that $\mathbb{P}$ is a bounded linear operator on $\mathds{R}^n \times \mathbb{H}_1$. Therefore, the identities $(I + \mathbb{P})(I - \mathbb{P}) = (I - \mathbb{P})(I + \mathbb{P}) = I - \mathbb{P}^2 = I$ imply that $I + \mathbb{P}$ is invertible. Consequently, its inverse is given by $I - \mathbb{P}$. Moreover,
\begin{align}\label{XdaoY0}
	\nonumber&Y(t)=X(t)+L_3w(0,t)+L_4w(1,t)+\int_0^1 L_1(x)w(x,t)dx\\
	&+\int_0^1L_2(x)w_t(x,t)dx,
\end{align}
and there holds
\begin{align*}
	&\dot{Y}(t)=\dot{X}(t)+L_3w_t(0,t)+L_4w_t(1,t)\\
	&+\int_0^1 L_1(x)w_t(x,t)dx+\int_0^1L_2(x)w_{tt}(x,t)dx\\
	&=AX(t)+B_1w(0,t)+B_2w_t(0,t)+B_3w(1,t)\\
	&+B_4w_t(1,t)+L_3w_t(0,t)+L_4w_t(1,t)\\
	&+\int_0^1 L_1(x)w_t(x,t)dx+\int_0^1L_2(x)w_{xx}(x,t)dx\\
	&=AY(t)-AL_3w(0,t)-AL_4w(1,t)+B_1w(0,t)\\
	&-\int_0^1 AL_1(x)w(x,t)dx-\int_0^1AL_2(x)w_t(x,t)dx\\
	&+B_2w_t(0,t)+B_3w(1,t)+B_4w_t(1,t)+L_3w_t(0,t)\\
	&+L_4w_t(1,t)+\int_0^1 L_1(x)w_t(x,t)dx+L_2(x)w_x(x,t)|_{x=0}^1\\
	&-L'_2(x)w(x,t)|_{x=0}^1+\int_0^1L''_2(x)w(x,t)dx\\
	&=AY(t)+(B_1-AL_3+L'_2(0))w(0,t)+(B_3-AL_4\\
	&-L'_2(1))w(1,t)+(B_2+L_3)w_t(0,t)+(B_4+L_4)w_t(1,t)\\
	&+\int_0^1 [L''_2(x)-AL_1(x)]w(x,t)dx+L_2(1)u(t)\\
	&+\int_0^1 [L_1(x)-AL_2(x)]w_t(x,t)dx.
\end{align*}
We construct a controller
\begin{align}\label{controllernodisturbance}
	\nonumber&u(t)=-\alpha w(1,t)-\beta w_t(1,t)+K^T\bigg[X(t)+L_3w(0,t)\\
	&+L_4w(1,t)+\int_0^1 L_1(x)w(x,t)dx+\int_0^1L_2(x)w_t(x,t)dx\bigg],
\end{align}
where $K$ is a column vector whose value is to be determined. This results in the closed-loop system:
\begin{equation} \label{closedloopnodisturbance}
	\left\{\begin{array}{l}
		\dot{X}(t)=AX(t)+B_1w(0,t)+B_2w_t(0,t)\\
		+B_3w(1,t)+B_4w_t(1,t), \; t>0, \\
		w_{tt}(x,t)=w_{xx}(x,t),\;\; x\in (0,1), \; t>0, \\
		w_x(0,t)=0, \; \;   t\ge 0,\\
		w_{x}(1,t)=-\alpha w(1,t)-\beta w_t(1,t)\\
		+K^T\bigg[X(t)+L_3w(0,t)
		+L_4w(1,t)\\
		+\int_0^1 L_1(x)w(x,t)dx+\int_0^1L_2(x)w_t(x,t)dx\bigg], \; \;  t\ge 0.\\
	\end{array}\right.
\end{equation}
Combine (\ref{XdaoY0}) and (\ref{closedloopnodisturbance}) to derive
\begin{align}\label{Yw}
	\left\{
	\begin{array}{ll}
		\dot{Y}(t)=[A+L_2(1)K^T]Y(t)+(B_1-AL_3\\
		+L'_2(0))w(0,t)+(B_2+L_3)w_t(0,t)
		+(B_3-AL_4\\
		-L'_2(1)-\alpha L_2(1))w(1,t)+(B_4+L_4\\
		-\beta L_2(1))w_t(1,t)
		+\int_0^1 [L''_2(x)-AL_1(x)]w(x,t)dx\\
		+\int_0^1 [L_1(x)-AL_2(x)]w_t(x,t)dx,\\
		w_{tt}(x,t)=w_{xx}(x,t),\\
		w_x(0,t)=0,\\
		w_x(1,t)=-\alpha w(1,t)-\beta w_t(1,t)+K^TY(t).
	\end{array}
	\right.
\end{align}
Our goal is to establish the exponential stability of system (\ref{Yw}). We therefore make the following assumption on the parameters $L_1, L_2, L_3, L_4$:
\begin{align}\label{L12340}
	\left\{
	\begin{array}{ll}
		L''_2(x)=A^2L_2(x), L_1(x)=AL_2(x),\\
		B_1-AL_3+L'_2(0)=0,\\
		B_2+L_3=0,      B_3-AL_4-L'_2(1)-\alpha L_2(1)=0,\\
		B_4+L_4-\beta L_2(1)=0,
	\end{array}
	\right.
\end{align}
which tells us that $L_1$, $L_3$ and $L_4$ are completely determined provided $L_2(x)$ is derived.
The closed-loop system then becomes
\begin{align}\label{clst}
	\left\{
	\begin{array}{ll}
		\dot{Y}(t)=[A+L_2(1)K^T]Y(t),\\
		w_{tt}(x,t)=w_{xx}(x,t),\\
		w_x(0,t)=0,\\
		w_x(1,t)=-\alpha w(1,t)-\beta w_t(1,t)+K^TY(t),
	\end{array}
	\right.
\end{align}
which is abstractly written by
\begin{align}\label{clst}
	\left\{
	\begin{array}{ll}
		\dot{Y}(t)=[A+L_2(1)K^T]Y(t),\\
		\frac{d}{dt}\left(
		\begin{array}{c}
			w(\cdot,t) \\
			w_t(\cdot,t) \\
		\end{array}
		\right)=\mathbb{A}_1\left(
		\begin{array}{c}
			w(\cdot,t) \\
			w_t(\cdot,t) \\
		\end{array}
		\right)
		+\mathbb{B}_{11}[K^TY(t)].
	\end{array}
	\right.
\end{align}
where the operator $\mathbb{A}_1$ is defined by
\begin{align*}
	\left\{
	\begin{array}{ll}
		\mathbb{A}_1(f,g)=(g,f''),(f,g)\in D(\mathbb{A}_1), \\
		D(\mathbb{A}_1)=\{(f,g)\in H^2(0,1)\times H^1(0,1)| f'(0)=0,\\
		f'(1)=-\alpha f(1)-\beta g(1),
	\end{array}
	\right.
\end{align*}
$\mathbb{B}_{11}=\delta(x-1)$ with $\delta$ being the Dirac function.
It is well-known that $\mathbb{A}_1$ generates an exponentially stable $C_0$-semigroup $e^{\mathbb{A}_1t}$ on $\mathbb{H}_1$.

The rest of this section is devoted to two main objectives: the solvability of ODE (\ref{L12340}) and the exponential stability of the closed-loop system (\ref{closedloopnodisturbance}). We begin by introducing the analytic function $\mathcal{G}(z)$, defined as
\begin{align*}
	\mathcal{G}(z)=\left\{
	\begin{array}{ll}
		\frac{\sinh(z)}{z},\;\; z\neq 0, z\in \mathds{C}, \\
		1, z=0.
	\end{array}
	\right.
\end{align*}
This function has been employed in studying the output tracking problem for a one-dimensional wave equation \cite{Feng2020a}, and it satisfies the identity $A\mathcal{G}(A) = \sinh A$.
\begin{theorem}\label{iandii}
	Assume that $\sigma(A)\bigcap \sigma(\mathbb{A}_1)=\varnothing$.\\
	(i) The matrix $A\sinh A+(\alpha+\beta A)\cosh A$ is invertible.\\
	(ii) The equation (\ref{L12340}) has a solution given by
	\begin{align}\label{L1234zhi}
		\left\{
		\begin{array}{ll}
			L_2(x)=-x\mathcal{G}(Ax)(AB_2+B_1)+ (\cosh Ax)Q, \\
			L_1(x)=-(\sinh (Ax))(AB_2+B_1)+A(\cosh Ax)Q,\\
			L_3=-B_2,\\
			L_4=-\beta \mathcal{G}(A)(AB_2+B_1)+ \beta(\cosh A)Q-B_4,
		\end{array}
		\right.
	\end{align}
	where
	\begin{align*}
		&Q=[A\sinh A+(\alpha+\beta A)\cosh A]^{-1}\{AB_4+B_3\\
		&+[\cosh A+(\alpha+\beta A)\mathcal{G}(A)][AB_2+B_1]\}.
	\end{align*}
	(iii) If, in addition, the pair $(A, A B_2 + B_1 + (\cosh A)(A B_4 + B_3))$ is controllable, then $K$ can be chosen so that $A + L_2(1)K^T$ is Hurwitz, provided that the closed-loop system (\ref{clst}) is exponentially stable.
\end{theorem}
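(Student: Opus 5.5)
The proof handles the three parts in order. It relies throughout on the fact that every matrix appearing ($\cosh A$, $\sinh A$, $\mathcal{G}(A)$, $\mathcal{G}(Ax)$, $\alpha+\beta A$) is an entire function of $A$, so all of them commute with $A$ and with each other.

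\emph{Part (i).} I would first compute $\sigma(\mathbb{A}_1)$. An eigenpair $\mathbb{A}_1(f,g)=\lambda(f,g)$ gives $g=\lambda f$ and $f''=\lambda^2 f$. The condition $f'(0)=0$ forces $f=c\cosh(\lambda x)$. The condition $f'(1)=-\alpha f(1)-\beta g(1)$ then gives
\begin{align*}
h(\lambda):=\lambda\sinh\lambda+(\alpha+\beta\lambda)\cosh\lambda=0 .
\end{align*}
At $\lambda=0$ we have $h(0)=\alpha\neq 0$, so $0\notin\sigma(\mathbb{A}_1)$. Hence $\sigma(\mathbb{A}_1)=\{\lambda: h(\lambda)=0\}$, since the resolvent is compact. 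By the spectral mapping theorem for the entire function $h$, $\sigma(h(A))=\{h(\mu):\mu\in\sigma(A)\}$. The hypothesis $\sigma(A)\cap\sigma(\mathbb{A}_1)=\varnothing$ gives $h(\mu)\neq0$ for every $\mu\in\sigma(A)$. Therefore $h(A)=A\sinh A+(\alpha+\beta A)\cosh A$ is invertible.

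\emph{Part (ii).} The general solution of $L_2''=A^2L_2$ is
\begin{align*}
L_2(x)=x\mathcal{G}(Ax)R+\cosh(Ax)P, \qquad L_2'(x)=\cosh(Ax)R+A\sinh(Ax)P,
\end{align*}
using $A x\mathcal{G}(Ax)=\sinh(Ax)$. The conditions in (\ref{L12340}) are then imposed one at a time.
\begin{itemize}
\item $B_2+L_3=0$ gives $L_3=-B_2$.
\item Substituting this into $B_1-AL_3+L_2'(0)=0$ gives $R=-(AB_2+B_1)$.
\item $L_1=AL_2$ then produces the stated formula for $L_1$.
\item The last equation gives $L_4=\beta L_2(1)-B_4$.
\end{itemize}
Inserting $L_4$ into the remaining equation yields $AB_4+B_3-(\alpha+\beta A)L_2(1)-L_2'(1)=0$. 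Substituting $L_2(1)=\mathcal{G}(A)R+\cosh A\,P$ and $L_2'(1)=\cosh A\,R+A\sinh A\,P$ turns this into
\begin{align*}
h(A)P=AB_4+B_3+[\cosh A+(\alpha+\beta A)\mathcal{G}(A)](AB_2+B_1).
\end{align*}
By (i) this is solved by $P=Q$. This reproduces (\ref{L1234zhi}).

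\emph{Part (iii).} This is the main step. I need $(A,L_2(1))$ to be controllable. If $M$ is invertible and commutes with $A$, then the Krylov matrices satisfy $[Mb,AMb,\dots]=M[b,Ab,\dots]$. So $(A,b)$ is controllable if and only if $(A,Mb)$ is. I take $M=h(A)$ and compute
\begin{align*}
h(A)L_2(1)=-h(A)\mathcal{G}(A)(AB_2+B_1)+\cosh A\big\{AB_4+B_3+[\cosh A+(\alpha+\beta A)\mathcal{G}(A)](AB_2+B_1)\big\}.
\end{align*}
The $(\alpha+\beta A)$ terms cancel. The remaining coefficient of $AB_2+B_1$ is $-A\mathcal{G}(A)\sinh A+\cosh^2A=\cosh^2A-\sinh^2A=I$. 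Hence
\begin{align*}
h(A)L_2(1)=AB_2+B_1+\cosh A\,(AB_4+B_3),
\end{align*}
and the assumed controllability transfers to $(A,L_2(1))$. Pole placement then gives $K$ with $A+L_2(1)K^T$ Hurwitz.

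\emph{Stability of the closed loop.} It follows that $Y(t)$ decays exponentially. The wave part is $e^{\mathbb{A}_1t}$ driven through the boundary operator $\mathbb{B}_{11}$ by the input $K^TY(t)$, which decays exponentially. Since $\mathbb{B}_{11}$ is admissible for the exponentially stable semigroup $e^{\mathbb{A}_1t}$ (standard for Neumann boundary input of the damped wave equation), the variation of constants formula gives exponential decay of $(w,w_t)$ in $\mathbb{H}_1$, so (\ref{clst}) is exponentially stable. Finally, $I+\mathbb{P}$ is bounded with bounded inverse $I-\mathbb{P}$, which transfers exponential stability back to (\ref{closedloopnodisturbance}). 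I expect the cancellation identity for $h(A)L_2(1)$ to be the only non-routine point; the admissibility estimate is standard.
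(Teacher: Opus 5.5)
Your proposal is correct and follows essentially the same route as the paper: you characterize $\sigma(\mathbb{A}_1)$ by $h(\lambda)=0$ and apply spectral mapping for (i), use the ansatz $L_2(x)=x\mathcal{G}(Ax)R+\cosh(Ax)P$ with the boundary conditions for (ii), and for (iii) use the identity $h(A)L_2(1)=AB_2+B_1+\cosh A\,(AB_4+B_3)$ together with the commutation of $h(A)^{-1}$ with $A$. Your closing stability argument is the one the paper gives separately in Theorem~\ref{state1}, and your use of $\mu\in\sigma(A)$ in (i) is the correct form of the paper's step, which misprints it as $\mu\in\rho(A)$.
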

\begin{proof}\ \
	(i) It can be readily established that $(\mathbb{A}_1)^{-1}$ exists and is compact. Consequently, the spectrum of $\mathbb{A}_1$ is discrete and consists entirely of eigenvalues. For $(f, g) \in D(\mathbb{A}_1)$, we consider the characteristic equation $\mathbb{A}_1(f, g) = \lambda (f, g)$, which reads:
	\begin{align*}
		\left\{
		\begin{array}{ll}
			f''(x)=\lambda^2 f(x),x\in(0,1),\\
			f'(0)=0,f'(1)=-(\alpha+\beta\lambda)f(1).
		\end{array}
		\right.
	\end{align*}
	It follows that $\lambda \in \sigma(\mathbb{A}_1)$ is equivalent to the following condition:
	\begin{align*}
		\lambda\sinh\lambda+(\alpha+\beta\lambda)\cosh\lambda=0.
	\end{align*}
	This implies that for any $\mu \in \rho(A)$, the expression $\mu \sinh \mu + (\alpha + \beta \mu) \cosh \mu$ is nonzero. Consequently, the operator $A \sinh A + (\alpha + \beta A) \cosh A$ is thus invertible on $\mathds{R}^n$.
	
	(ii) From (\ref{L12340}), we obtain $L_1(x) = A L_2(x)$, $L_3 = -B_2$, $L_4 = \beta L_2(1) - B_4$, and
	\begin{align}\label{L12}
		\left\{
		\begin{array}{ll}
			L''_2(x)=A^2L_2(x),  \\
			L'_2(0)=-AB_2-B_1,\\
			L'_2(1)+(\alpha+\beta A)L_2(1)=AB_4+B_3.
		\end{array}
		\right.
	\end{align}
	By the standard existence theorem for ordinary differential equations, the solution to (\ref{L12}) exists and can be expressed in the form
	\begin{align*}
		L_2(x)=x\mathcal{G}(Ax)P+(\cosh(Ax))Q_0,
	\end{align*}
	where $P$ and $Q_0$ are vectors whose values are to be determined. The boundary conditions of (\ref{L12}) provide the constraints:
	\begin{align*}
		\left\{
		\begin{array}{ll}
			P=-AB_2-B_1, \\
			AB_4+B_3=-(\cosh A)P+A(\sinh A)Q_0\\
			+(\alpha+\beta A)[\mathcal{G}(A)P+(\cosh A)Q_0].
		\end{array}
		\right.
	\end{align*}
	Since by (i) the matrix $A\sinh A+(\alpha+\beta A)\cosh A$ is invertible, we derive
	\begin{align*}
		L_2(x)=-[AB_2+B_1]x\mathcal{G}(Ax)+(\cosh(Ax))Q,
	\end{align*}
	where
	\begin{align*}
		&Q=[A\sinh A+(\alpha+\beta A)\cosh A]^{-1}\{AB_4+B_3\\
		&+[\cosh A+(\alpha+\beta A)\mathcal{G}(A)][AB_2+B_1]\}.
	\end{align*}
	
	(iii) We compute
	\begin{align*}
		&L_2(1)=-\mathcal{G}(A)(AB_2+B_1)+ (\cosh A)Q\\
		&=[A\sinh A+(\alpha+\beta A)\cosh A]^{-1}\{-\mathcal{G}(A)[A\sinh A\\
		&+(\alpha+\beta A)\cosh A](AB_2+B_1)+(\cosh A)(AB_4+B_3)\\
		&+(\cosh A)^2(AB_2+B_1)+(\alpha\\
		&+\beta A)\cosh A\mathcal{G}(A)(AB_2+B_1)\}\\
		&=[A\sinh A+(\alpha+\beta A)\cosh A]^{-1}[AB_2+B_1\\
		&+\cosh A (AB_4+B_3)],
	\end{align*}
	where we have used the identity $A\mathcal{G}(A) = \sinh A$.
	
	Since $(A,AB_2+B_1+\cosh A (AB_4+B_3))$ is controllable, by the rank criterion, it follows that
	${\rm rank}\big(AB_2+B_1+\cosh A (AB_4+B_3),A(AB_2+B_1+\cosh A (AB_4+B_3)),\cdots, A^{n-1}(AB_2+B_1+\cosh A (AB_4+B_3))\big)=n$.
	This indicates that ${\rm rank}\big(W(AB_2+B_1+\cosh A (AB_4+B_3)),WA(AB_2+B_1+\cosh A (AB_4+B_3)),\cdots, WA^{n-1}(AB_2+B_1+\cosh A (AB_4+B_3))\big)=n$,
	where $W=[A\sinh A+(\alpha+\beta A)\cosh A]^{-1}$. One can see that $W$ and $A$ are interchangeable,
	thereby ${\rm rank}\big(W(AB_2+B_1+\cosh A (AB_4+B_3)),AW(AB_2+B_1+\cosh A (AB_4+B_3)),\cdots,A^{n-1}W(AB_2$
	$+B_1+\cosh A (AB_4+B_3))\big)=n$.
	This tells us that $\big(A,W(AB_2+B_1+\cosh A (AB_4+B_3))\big)$, or $(A,L_2(1))$
	is controllable. The controllability of $(A,L_2(1))$ indicates that
	$K$ can be chosen such that $A+L_2(1)K^T$ is a Hurwitz matrix.
	This completes the proof.
\end{proof}

{\bf Throughout the rest of this paper, we assume that $\sigma(A)\bigcap \sigma(\mathbb{A}_1)=\varnothing$, and that the pair
	$(A,AB_2+B_1+(\cosh A)(AB_4+B_3))$  is controllable, ensuring that $A+L_2(1)K^T$ is a Hurwitz matrix.}

\begin{theorem}\label{state1}
	The closed-loop system (\ref{closedloopnodisturbance}) is exponentially stable.
\end{theorem}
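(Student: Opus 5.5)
We reduce the claim to exponential stability of the transformed system (\ref{clst}) and then pull it back through the bounded invertible map $I+\mathbb{P}$. By Theorem \ref{iandii}(ii), $L_1,L_2,L_3,L_4$ given by (\ref{L1234zhi}) satisfy (\ref{L12340}). Hence every (classical) solution $(X,w,w_t)$ of (\ref{closedloopnodisturbance}) is mapped by (\ref{IjiaP}) to a solution $(Y,w,w_t)$ of the cascade (\ref{clst}), and conversely through $I-\mathbb{P}$. Since $\mathbb{P}$ is bounded on $\mathds{R}^n\times\mathbb{H}_1$ (by the estimates preceding (\ref{XdaoY0})) and $(I+\mathbb{P})^{-1}=I-\mathbb{P}$, the two systems are equivalent. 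Well-posedness and exponential stability therefore transfer, with
\begin{align*}
\|(X,w,w_t)(t)\|\le \|I-\mathbb{P}\|\,\|(Y,w,w_t)(t)\|,\qquad \|(Y,w,w_t)(0)\|\le \|I+\mathbb{P}\|\,\|(X,w,w_t)(0)\|.
\end{align*}

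For the cascade (\ref{clst}), the ODE is autonomous. Since $A+L_2(1)K^T$ is Hurwitz by the standing assumption and Theorem \ref{iandii}(iii), we have $\|Y(t)\|\le M_0e^{-\omega_0 t}\|Y(0)\|$. The wave part is driven by the scalar input $K^TY(t)$ through $\mathbb{B}_{11}=\delta(x-1)$, and its solution is given by the variation of constants formula
\begin{align*}
(w,w_t)(t)=e^{\mathbb{A}_1t}(w,w_t)(0)+\int_0^t e^{\mathbb{A}_1(t-s)}\mathbb{B}_{11}K^TY(s)\,ds .
\end{align*}
The first term decays exponentially because $\mathbb{A}_1$ generates an exponentially stable semigroup. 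For the second term, we use that $\mathbb{B}_{11}$ is admissible for $e^{\mathbb{A}_1t}$. This is standard for the wave equation with Neumann boundary input at $x=1$: it is equivalent to the boundary trace $w_t(1,\cdot)$ of the dual system being locally $L^2$-in-time bounded by the initial energy, which follows from the multiplier $xw_x$. Admissibility combined with exponential stability of the semigroup gives
\begin{align*}
\Big\|\int_0^t e^{\mathbb{A}_1(t-s)}\mathbb{B}_{11}v(s)ds\Big\|_{\mathbb{H}_1}\le M\Big(\int_0^t e^{-2\omega(t-s)}|v(s)|^2ds\Big)^{1/2}.
\end{align*}
Taking $v=K^TY$, which decays exponentially, this term decays like $e^{-\min(\omega,\omega_0)t/2}$, up to a factor $t$ in the equal-rate case, which is harmless. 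Hence $(Y,w,w_t)$ decays exponentially in $\mathds{R}^n\times\mathbb{H}_1$.

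Alternatively, one can avoid admissibility and use a Lyapunov functional
\begin{align*}
V=Y^TPY+\tfrac12\Big[\int_0^1(w_x^2+w_t^2)dx+\alpha w(1)^2\Big]+\varepsilon\int_0^1 xw_xw_t\,dx,
\end{align*}
with $P$ solving the Lyapunov equation for $A+L_2(1)K^T$. The cross term $K^TY\,w_t(1)$ is absorbed by $\beta w_t(1)^2$ via Young's inequality, and the large weight on $Y^TPY$ dominates the remaining $|Y|^2$ terms. The main obstacle is this step, namely rigorously handling the unbounded boundary input $K^TY$ in the PDE. I would try admissibility first, falling back on the multiplier/Lyapunov estimate, and justify the latter computation on classical solutions with $(w,w_t)(0)\in D(\mathbb{A}_1)$-compatible data and extend by density.
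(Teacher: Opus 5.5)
Your main argument is correct and is essentially the paper's own proof: transform by $I+\mathbb{P}$, use that $A+L_2(1)K^T$ is Hurwitz, combine exponential stability of $e^{\mathbb{A}_1t}$ with admissibility of $\mathbb{B}_{11}$ (you write out the weighted convolution estimate that the paper takes from Lemma 2.1 of the cited work), and pull back by $I-\mathbb{P}$. The only flaw is in your fallback: the functional $V$ as written cannot satisfy $\dot V\le-\mu V$, because the multiplier $xw_x$ contributes $+\tfrac{\varepsilon}{2}w_x(1)^2$, which contains $\tfrac{\varepsilon}{2}\alpha^2w(1)^2$, while no term dissipates the spring energy $\alpha w(1)^2$ (test near $w\equiv c$, $w_t=0$, $Y=0$), so it would need an additional term, e.g.\ one built on $\int_0^1 ww_t\,dx$, and a more delicate balancing — which the admissibility route makes unnecessary.
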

\begin{proof}\ \
	Since $A+L_2(1)K^\mathrm{T}$ is a Hurwitz matrix, the solution $Y(t) = e^{(A+L_2(1)K^\mathrm{T})t}Y(0)$ is exponentially stable. This further implies that $K^\mathrm{T}Y(t)$ is also exponentially stable. By invoking \cite{Guo2008}, it follows that the semigroup $e^{\mathbb{A}_1 t}$ is exponentially stable and the operator $\mathbb{B}_{11}$ is admissible for $e^{\mathbb{A}_1 t}$. Then, applying \cite[Lemma 2.1]{Zhou2018b}, we conclude that the state $(w(\cdot,t), w_t(\cdot,t))$ is exponentially stable. Consequently, the composite state $(X(t), w(\cdot,t), w_t(\cdot,t)) = (I - \mathbb{P})(Y(t), w(\cdot,t), w_t(\cdot,t))$ is exponentially stable.
\end{proof}

\begin{remark}\rm
	Theorem \ref{state1} implies that, in contrast to the existing literature \cite{LiRC2021,Liu2017,Zhen2016,Zhou2016a} which deals with systems having only a single interconnection from the PDE to the ODE, our controller design successfully achieves exponential stabilization for an ODE-wave system with four interconnections. This notable result is made possible precisely by the novel transformation (\ref{IjiaP}).
\end{remark}
\begin{remark}\rm
	In the particular case where $B_2 = B_3 = B_4 = 0$, the assumed controllability of the pair $(A, A B_2 + B_1 + (\cosh A)(A B_4 + B_3))$ reduces to that of $(A, B_1)$, thereby recovering the standard controllability condition used in \cite{Zhou2016a}. Under this condition, the controller (\ref{controllernodisturbance}) becomes:
	\begin{align}\label{B234eq0}
		\nonumber&u(t)=-\alpha w(1,t)-\beta w_t(1,t)+K^T\bigg[X(t)+L_4w(1,t)\\
		&+\int_0^1 L_1(x)w(x,t)dx+\int_0^1L_2(x)w_t(x,t)dx\bigg],
	\end{align}
	where
	\begin{align*}
		\left\{
		\begin{array}{ll}
			L_2(x)=-x\mathcal{G}(Ax)B_1+ (\cosh Ax)Q_1, \\
			L_1(x)=-(\sinh (Ax))B_1+A(\cosh Ax)Q_1,\\
			L_4=-\beta \mathcal{G}(A)B_1+ \beta(\cosh A)Q_1,
		\end{array}
		\right.
	\end{align*}
	$Q_1=\left[A\sinh A+(\alpha+\beta A)\cosh A\right]^{-1}\left[\cosh A+(\alpha+\beta A)\mathcal{G}(A)\right]B_1].$
	Our method achieves stabilization with only one invertible transformation (\ref{IjiaP}), thereby simplifying the control architecture.
	This offers a notable advantage over \cite{Zhou2016a}, which relies on two separate PDE backstepping transformations, and yields a simpler controller (\ref{controllernodisturbance}).
\end{remark}

	\section{Stabilization in presence of disturbance ($F(t)\neq0$)}
	
	This section considers the output feedback exponential stabilization of system (\ref{beem}) under nonlinear internal uncertainties and external disturbances ($F(t) \neq 0$).
	The existence of the total disturbance necessitates its prior rejection. We therefore begin by designing a disturbance estimator. Based on the wave part of system (\ref{beem}), the following estimator is thus proposed:
	in \cite[(4)]{Wei2020} described by
	\begin{equation} \label{transfer0}
		\left\{\begin{array}{l}
			z_{tt}(x,t)=z_{xx}(x,t),\;\; x\in (0,1), \; t>0, \\
			z_x(0,t)=k [z_t(0,t)-w_t(0,t)], \; \;   t\ge 0,\\
			z_x(1,t)=-\alpha [z(1,t)-w(1,t)]+u(t) \;\;  t\ge 0, \\
			p_{tt}(x,t)=p_{xx}(x,t),\;\; x\in (0,1), \; t>0, \\
			p_x(0,t)=-kp_t(0,t),p(1,t)=z(1,t)-w(1,t) \;\;  t\ge 0, \\
		\end{array}\right.
	\end{equation}
	which is computed solely from the available input and output of (\ref{beem}), where $k > 0$ is a tuning parameter.
	
	Consider the error variables defined by $\widehat{z}(x,t)=z(x,t)-w(x,t)$ and $\widehat{p}(x,t)=p(x,t)-\widehat{z}(x,t)$. The following error system is then derived:
	\begin{equation} \label{perror}
		\left\{\begin{array}{l}
			\widehat{z}_{tt}(x,t)=\widehat{z}_{xx}(x,t),\;\; x\in (0,1), \; t>0, \\
			\widehat{z}_x(0,t)=k\widehat{z}_t(0,t), \widehat{z}_x(1,t)=-\alpha \widehat{z}(1,t)-F(t), \;\;  t\ge 0, \\
			\widehat{p}_{tt}(x,t)=\widehat{p}_{xx}(x,t),\;\; x\in (0,1), \; t>0, \\
			\widehat{p}_{x}(0,t)=k\widehat{p}_t(0,t),\widehat{p}(1,t)=0,\;\;  t\ge 0, \\
		\end{array}\right.
	\end{equation}
	We see that the system is cascaded by the $\widehat{z}$-subsystem and the $\widehat{p}$-subsystem, and the latter is independent of the former. Here, the Hilbert space is defined as $\mathbb{H}_2 = H_E^1(0,1) \times L^2(0,1)$, with $H_E^1(0,1) = { f \in H^1(0,1) \mid f(1) = 0 }$, and the norm is given as follows
	$$\|(f,g)\|^2_{\mathbb{H}_2}=\int_0^1[|f''(x)|^2+|g(x)|^2]dx, \;\forall (f,g)\in \mathbb{H}_2.$$
	We define $\mathbb{A}$ by
	\begin{align*}
		\left\{
		\begin{array}{ll}
			\mathbb{A}(f,g)=(g,f''),(f,g)\in D(\mathbb{A}), \\
			D(\mathbb{A})=\{(f,g)\in H^2(0,1)\times H^1(0,1)| f'(0)=kg(0),\\
			f'(1)=-\alpha f(1)\},
		\end{array}
		\right.
	\end{align*}
	and define $\mathbb{A}_2$ by
	\begin{align*}
		\left\{
		\begin{array}{ll}
			\mathbb{A}_2(f,g)=(g,f''),(f,g)\in D(\mathbb{A}_2), \\
			D(\mathbb{A}_2)=\{(f,g)\in (H^2(0,1)\bigcap H_E^1(0,1))\times \\
			H_E^1(0,1)| f'(0)=kg(0)\}.
		\end{array}
		\right.
	\end{align*}
	Then system (\ref{perror}) can be written abstractly as follows
	\begin{align}\label{perrorsemigroup}
		\left\{
		\begin{array}{ll}
			\frac{d}{dt}\left(
			\begin{array}{c}
				\widehat{z}(\cdot,t)\\
				\widehat{z}_t(\cdot,t) \\
			\end{array}
			\right)=\mathbb{A}\left(
			\begin{array}{c}
				\widehat{z}(\cdot,t)\\
				\widehat{z}_t(\cdot,t) \\
			\end{array}
			\right)-\mathbb{B}F(t), \\
			\frac{d}{dt}\left(
			\begin{array}{c}
				\widehat{p}(\cdot,t)\\
				\widehat{p}_t(\cdot,t) \\
			\end{array}
			\right)=\mathbb{A}_2\left(
			\begin{array}{c}
				\widehat{p}(\cdot,t)\\
				\widehat{p}_t(\cdot,t) \\
			\end{array}
			\right),
		\end{array}
		\right.
	\end{align}
	where $\mathbb{B}=(0,\delta(x-1))$ with $\delta$ being the Dirac delta function. The operators $\mathbb{A}$ and $\mathbb{A}_2$ are well-known to generate $C_0$-semigroups $e^{\mathbb{A}t}$ on $\mathbb{H}1$ and $e^{\mathbb{A}_2t}$ on $\mathbb{H}_2$, respectively. Therefore, there exist constants $M_{\mathbb{A}}, M_{\mathbb{A}2}, \omega_{\mathbb{A}}, \omega_{\mathbb{A}_2} > 0$ such that
	\begin{align}\label{AA2}
		\|e^{\mathbb{A}t}\|_{\mathbb{H}_1}\leq M_{\mathbb{A}}e^{-\omega_{\mathbb{A}}t},
		\|e^{\mathbb{A}_2t}\|_{\mathbb{H}_2}\leq M_{\mathbb{A}_2}e^{-\omega_{\mathbb{A}_2}t},t\geq 0.
	\end{align}
	Moreover, by \cite{Wei2020}, $\mathbb{B}$ is admissible for $e^{\mathbb{A}t}$.
	
	According to \cite[Remark 2.2]{Wei2020}, $F(t) = -\widehat{z}_x(1,t) - \alpha \widehat{z}(1,t) = -p_x(1,t) - \alpha p(1,t) + \widehat{p}_x(1,t)$. This leads to the approximation $F(t) \approx -p_x(1,t) - \alpha p(1,t)$, and hence $-p_x(1,t) - \alpha p(1,t)$ can be regarded as the estimate of the total disturbance $F(t)$.
	
	Armed with the disturbance estimator, we construct a Luenberger observer for state estimation.
	This observer is designed by duplicating the plant dynamics (\ref{beem}), which serves as the nominal model:
	\begin{equation} \label{stateobserver}
		\left\{\begin{array}{l}
			\dot{\widehat{X}}(t)=A\widehat{X}(t)+B_1\widehat{w}(0,t)+B_2w_t(0,t)+B_3w(1,t)\\
			+B_4\widehat{w}_t(1,t)+H[C\widehat{X}(t)-CX(t)],\;\;  t\ge 0, \\
			\widehat{w}_{tt}(x,t)=\widehat{w}_{xx}(x,t),\;\; x\in (0,1), \; t>0, \\
			\widehat{w}_x(0,t)=k [\widehat{w}_t(0,t)-w_t(0,t)], \; \;   t\ge 0,\\
			\widehat{w}_x(1,t)=-\alpha [\widehat{w}(1,t)-w(1,t)]\\
			+u(t)-p_x(1,t)-\alpha p(1,t) \;\;  t\ge 0,
		\end{array}\right.
	\end{equation}
	where $-p_x(1,t)-\alpha p(1,t)$ plays the role of the total disturbance $F(t)$.
	We introduce the observer errors $\widetilde{X}(t) = \widehat{X}(t) - X(t)$ and $\widetilde{z}(x,t) = \widehat{w}(x,t) - w(x,t)$. The resulting error system is given by:
	\begin{equation} \label{stateerror}
		\left\{\begin{array}{l}
			\dot{\widetilde{X}}(t)=(A+HC)\widetilde{X}(t)+B_1\widetilde{w}(0,t)+B_4\widetilde{w}_t(1,t),\;\;  t\ge 0, \\
			\widetilde{w}_{tt}(x,t)=\widetilde{w}_{xx}(x,t),\;\; x\in (0,1), \; t>0, \\
			\widetilde{w}_x(0,t)=k \widetilde{w}_t(0,t), \; \;   t\ge 0,\\
			\widetilde{w}_x(1,t)=-\alpha \widetilde{w}(1,t)-\widehat{p}_x(1,t). \;\;  t\ge 0, \\
			\widehat{p}_{tt}(x,t)=\widehat{p}_{xx}(x,t),\;\; x\in (0,1), \; t>0, \\
			\widehat{p}_{x}(0,t)=k\widehat{p}_t(0,t),\widehat{p}(1,t)=0,\;\;  t\ge 0,
		\end{array}\right.
	\end{equation}
	
	The following Theorem \ref{wwan} establishes the exponential convergence of the observer designed in (\ref{stateobserver}).
	\begin{theorem}\label{wwan}
		For any $(\widetilde{X}(0),\widetilde{w}(\cdot,0),\widetilde{w}_t(\cdot,0),\widehat{p}(\cdot,0),\widehat{p}_t(\cdot,0))\in \mathds{R}^n\times\mathbb{H}_1\times \mathbb{H}_2$, there
		exists a unique solution $(\widetilde{X}(t),\widetilde{X}(t),\widetilde{w}(\cdot,t),\widetilde{w}_t(\cdot,t),\widehat{p}(\cdot,t),\widehat{p}_t(\cdot,t))\in C(0,\infty;\mathds{R}^n\times\mathbb{H}_1\times\mathbb{H}_2)$. Moreover, there exist two constants $M_1$
		and $\gamma_1$ such that
		$\|(\widetilde{X}(t),\widetilde{w}(\cdot,t),\widetilde{w}_t(\cdot,t),\widehat{p}(\cdot,t),\widehat{p}_t(\cdot,$
		$t))\|_{\mathds{R}^n\times\mathbb{H}_1\times \mathbb{H}_2}\leq M_1e^{-\gamma_1t}$, $t\geq 0$.
	\end{theorem}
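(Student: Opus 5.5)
The error system (\ref{stateerror}) is a triangular cascade: $\widehat{p}$ evolves on its own, it drives $\widetilde{w}$ through the boundary term $-\widehat{p}_x(1,t)$, and $\widetilde{w}$ in turn drives $\widetilde{X}$ through $B_1\widetilde{w}(0,t)+B_4\widetilde{w}_t(1,t)$. I will therefore establish well-posedness and exponential decay layer by layer, in the order $\widehat{p}\to\widetilde{w}\to\widetilde{X}$.

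\textbf{Step 1: the $\widehat p$-subsystem.} Since $\widehat p$ solves $\frac{d}{dt}(\widehat p,\widehat p_t)=\mathbb{A}_2(\widehat p,\widehat p_t)$, (\ref{AA2}) gives directly
$$\|(\widehat p,\widehat p_t)(t)\|_{\mathbb{H}_2}\le M_{\mathbb{A}_2}e^{-\omega_{\mathbb{A}_2}t}\|(\widehat p,\widehat p_t)(0)\|_{\mathbb{H}_2}.$$

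\textbf{Step 2: the $\widetilde w$-subsystem.} This part I would write as $\frac{d}{dt}(\widetilde w,\widetilde w_t)=\mathbb{A}(\widetilde w,\widetilde w_t)-\mathbb{B}\widehat p_x(1,t)$. Because $\mathbb{B}$ is admissible for $e^{\mathbb{A}t}$ and $e^{\mathbb{A}t}$ is exponentially stable, \cite[Lemma 2.1]{Zhou2018b} (as used in Theorem \ref{state1}) yields exponential decay, provided the input satisfies $\widehat p_x(1,\cdot)\in L^2_{loc}$ with an exponentially weighted bound
$$\int_0^\infty e^{2\omega t}|\widehat p_x(1,t)|^2dt<\infty \quad \text{for some } \omega>0.$$

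\textbf{Main obstacle.} The point trace $\widehat p_x(1,t)$ is not bounded on $\mathbb{H}_2$, so the weighted bound above is where the real work lies. I would handle it through admissibility of the observation operator $C_2(f,g)=f'(1)$ for $e^{\mathbb{A}_2t}$, which is the dual of the Neumann-type control admissibility. My first attempt is a multiplier argument for smooth data. Let $E(t)=\frac12\int_0^1(\widehat p_x^2+\widehat p_t^2)dx$ and $\rho(t)=\int_0^1 x\widehat p_x\widehat p_t\,dx$. Using $\widehat p_t(1,t)=0$ and $\widehat p_x(0,t)=k\widehat p_t(0,t)$, one computes
$$\dot\rho(t)=\tfrac12\widehat p_x^2(1,t)-E(t), \qquad \dot E(t)=-k\widehat p_t^2(0,t)\le 0.$$
Integrating over $[0,T]$ gives $\int_0^T\widehat p_x^2(1,t)dt\le C_T E(0)$, which extends to all of $\mathbb{H}_2$ by density. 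Then applying this bound on successive unit intervals and combining with Step 1 gives $\int_{n}^{n+1}|\widehat p_x(1,t)|^2dt\le Ce^{-2\omega_{\mathbb{A}_2}n}$, which is the required weighted bound. Alternatively, the admissibility could be quoted from \cite{Wei2020}. With this in hand, the variation-of-constants formula gives the unique mild solution $(\widetilde w,\widetilde w_t)\in C(0,\infty;\mathbb{H}_1)$, decaying as $Me^{-\omega t}$ with any $\omega<\min(\omega_{\mathbb A},\omega_{\mathbb A_2})$.

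\textbf{Step 3: the ODE part.} I would write
$$\widetilde X(t)=e^{(A+HC)t}\widetilde X(0)+\int_0^te^{(A+HC)(t-s)}\bigl[B_1\widetilde w(0,s)+B_4\widetilde w_t(1,s)\bigr]ds.$$
The term $\widetilde w(0,s)$ is bounded by the $\mathbb{H}_1$ norm, as shown in Section 2, so it decays exponentially. The term $\widetilde w_t(1,s)$ is again a trace that is not bounded on $\mathbb{H}_1$. I would handle it by showing that $(f,g)\mapsto g(1)$ is admissible for $e^{\mathbb{A}t}$, using the same multiplier $\int_0^1 x\widetilde w_x\widetilde w_t\,dx$, now with the boundary input $\widehat p_x(1,\cdot)$ included. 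Writing $\widetilde w_x(1,t)=-\alpha\widetilde w(1,t)-\widehat p_x(1,t)$, this identity bounds $\int_n^{n+1}|\widetilde w_t(1,t)|^2dt$ by the energy at time $n$ plus $\int_n^{n+1}|\widehat p_x(1,t)|^2dt$, and both decay exponentially. Cauchy--Schwarz inside the convolution, together with the Hurwitz bound $\|e^{(A+HC)t}\|\le Me^{-\mu t}$, then gives exponential decay of $\widetilde X$ at rate $\gamma_1<\min(\mu,\omega)$.

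\textbf{Conclusion.} Summing the three estimates gives the bound $M_1e^{-\gamma_1t}$, with $M_1$ depending on the initial data. Uniqueness and continuity follow from the cascade structure, since each layer is a well-posed linear system driven by an admissible input.
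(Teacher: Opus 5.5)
Your argument is correct, but Step 2 takes a longer route than the paper.

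\textbf{The paper's Step 2.} The paper never treats $\widehat p_x(1,t)$ as an input. It sets $\widetilde\epsilon=\widetilde w+\widehat p$. Because $\widehat p$ satisfies the same condition $\widehat p_x(0,t)=k\widehat p_t(0,t)$ at $x=0$, and $\widehat p(1,t)=0$ at $x=1$, the sum solves the homogeneous problem $\widetilde\epsilon_x(0,t)=k\widetilde\epsilon_t(0,t)$, $\widetilde\epsilon_x(1,t)=-\alpha\widetilde\epsilon(1,t)$. In other words, $(\widetilde\epsilon,\widetilde\epsilon_t)=e^{\mathbb{A}t}(\widetilde\epsilon(\cdot,0),\widetilde\epsilon_t(\cdot,0))$. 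Exponential decay of $\widetilde w=\widetilde\epsilon-\widehat p$ in $\mathbb{H}_1$ then follows at once from (\ref{AA2}), using $\mathbb{H}_2\subset\mathbb{H}_1$. This needs neither the admissibility of $\mathbb{B}$ nor the hidden-regularity estimate for $\widehat p_x(1,\cdot)$ that you single out as the main obstacle.

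\textbf{Step 3.} Your Step 3 is essentially the paper's argument: the trace bound for $\widetilde w(0,t)$, the multiplier $\int_0^1 x\widetilde w_t\widetilde w_x\,dx$ to place $\widetilde w_t(1,\cdot)$ in an exponentially weighted $L^2$ space, and then variation of constants, which the paper handles via \cite[Lemma 2.1]{Zhou2018b}. One simplification is available there. The term $|\widetilde w_x(1,t)|^2$ enters that multiplier identity with the same sign as $|\widetilde w_t(1,t)|^2$. So you need not substitute the boundary condition or bring in $\widehat p_x(1,\cdot)$; bounding by the $\widetilde w$-energy on $[n,n+1]$ suffices once $\widetilde w$ is known to decay.

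\textbf{Trade-off.} The paper's substitution buys brevity. It exploits the fact that the estimator (\ref{transfer0}) was designed precisely so that this cancellation occurs. Your route buys robustness, since it does not rely on that exact matching of boundary conditions. It also gives a quantitative byproduct: the estimation error $\widehat p_x(1,t)=F(t)-[-p_x(1,t)-\alpha p(1,t)]$ lies in an exponentially weighted $L^2$ space. The paper states this only as the informal approximation $F(t)\approx -p_x(1,t)-\alpha p(1,t)$.
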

	\begin{proof}\ \
		Set $\widetilde{\epsilon}(x,t)=\widetilde{w}(x,t)+\widehat{p}(x,t)$ to obtain
		\begin{equation} \label{epsilonwan}
			\left\{\begin{array}{l}
				\widetilde{\epsilon}_{tt}(x,t)=\widetilde{\epsilon}_{xx}(x,t),\;\; x\in (0,1), \; t>0, \\
				\widetilde{\epsilon}_x(0,t)=k \widetilde{\epsilon}_t(0,t), \widetilde{\epsilon}_x(1,t)=-\alpha \widetilde{\epsilon}(1,t), \;\;  t\ge 0. \\
			\end{array}\right.
		\end{equation}
		Then, the solution $(\widetilde{\epsilon}(\cdot,t),\widetilde{\epsilon}_t(\cdot,t)) = e^{\mathbb{A}t}(\widetilde{\epsilon}(\cdot,0),\widetilde{\epsilon}_t(\cdot,0))$ uniquely exists in $C(0,\infty;\mathbb{H}_1)$ and is exponentially stable. Similarly, $(\widehat{p}(\cdot,t),\widehat{p}_t(\cdot,t)) = e^{\mathbb{A}_2t}(\widehat{p}(\cdot,0),\widehat{p}_t(\cdot,0))$ also enjoys unique existence and exponential stability. Consequently, the claimed properties for the $\widetilde{w}$-part and $\widehat{p}$-part are established.
		
		We now compute
		\begin{align}\label{XX}
			\nonumber& |\widetilde{w}(0,t)|^2\leq 2|\widetilde{w}(1,t)|^2+2\int_0^t|\widetilde{w}_x(x,t)|^2dx\\
			&\leq (2/\alpha+2)\|(\widetilde{w}(\cdot,t),\widetilde{w}_t(\cdot,t))\|^2_{\mathbb{H}_1}.
		\end{align}
		This implies that $\widetilde{w}(0,t)$ is exponentially stable. Define the energy function
		$$E_0(t) = \int_0^1 \left[|\widetilde{w}_t(x,t)|^2 + |\widetilde{w}_x(x,t)|^2 + |\widehat{p}_t(x,t)|^2 + |\widehat{p}_x(x,t)|^2\right] dx + \alpha |\widetilde{w}(1,t)|^2$$
		and the auxiliary function $\rho(t) = \int_0^1 x \widetilde{w}_t(x,t) \widetilde{w}_x(x,t) dx$. Then there exist positive constants $M_0$ and $\gamma_0$ such that $|\rho(t)| \leq E_0(t) \leq M_0^2 e^{-2\gamma_0 t}$ and
		\begin{align}\label{Jisuan}
			\nonumber &\dot{\rho}(t)=\int_0^1x\widetilde{w}_{tt}(x,t)\widetilde{w}_x(x,t)dx+\int_0^1x\widetilde{w}_{t}(x,t)\widetilde{w}_{xt}(x,t)dx\\
			\nonumber&=x[|\widetilde{w}_{t}(x,t)|^2+|\widetilde{w}_x(x,t)|^2]|_{x=0}^1\\
			\nonumber&-\int_0^1[|\widetilde{w}_{t}(x,t)|^2+|\widetilde{w}_x(x,t)|^2]dx\\
			\nonumber&=|\widetilde{w}_{t}(1,t)|^2+|\widetilde{w}_x(1,t)|^2\\
			&-\int_0^1[|\widetilde{w}_{t}(x,t)|^2+|\widetilde{w}_x(x,t)|^2]dx.
		\end{align}
		Given $0<\gamma<2\gamma_0$. We use (\ref{Jisuan}) to derive
		\begin{align}\label{Jisuan11}
			\nonumber&\frac{d}{dt}[e^{\gamma t}\rho(t)]=\gamma e^{\gamma t}\rho(t)+e^{\gamma t}\dot{\rho}(t)\\
			\nonumber&=\gamma e^{\gamma t}\rho(t)+e^{\gamma t}\bigg[|\widetilde{w}_{t}(1,t)|^2+|\widetilde{w}_x(1,t)|^2\\
			&-\int_0^1[|\widetilde{w}_{t}(x,t)|^2+|\widetilde{w}_x(x,t)|^2]dx\bigg],
		\end{align}
		which gives
		\begin{align}\label{Jisuan22}
			\nonumber& \int_0^\infty e^{\gamma t}|\widetilde{w}_{t}(1,t)|^2dt\\
			\nonumber&\leq \int_0^\infty e^{\gamma t}[|\widetilde{w}_{t}(1,t)|^2+|\widetilde{w}_x(1,t)|^2]dt\\
			\nonumber&=\int_0^\infty\frac{d}{dt}[e^{\gamma t}\rho(t)]dt-\int_0^\infty\gamma e^{\gamma t}\rho(t)dt\\
			\nonumber&+\int_0^\infty \bigg[e^{\gamma t}\int_0^1[|\widetilde{w}_{t}(x,t)|^2+|\widetilde{w}_x(x,t)|^2]dx\bigg]dt,\\
			\nonumber&\leq -\rho(0)+\int_0^\infty(\gamma+1)M_0^2e^{\gamma t}e^{-2\gamma_0t}dt\\
			&=E(0)+\frac{(1+\gamma)M_0^2}{2\gamma_0-\gamma}.
		\end{align}
		This implies that $e^{\gamma t}\widetilde{w}_{t}(1,t)\in L^2(0,\infty)$. Furthermore, since $\widetilde{w}(0,t)$ is exponentially stable,
		it follows from \cite[Lemma 2.1]{Zhou2018b} that $\widetilde{X}(t)$ is also exponentially stable.
	\end{proof}
	
	The exponential stability of the disturbance-free cascaded ODE-wave system (\ref{beem}) under controller (\ref{controllernodisturbance}),
	combined with the state observer (\ref{stateobserver}), motivates the design of an output-feedback controller that incorporates a disturbance estimator.
	\begin{align}\label{controller}
		\nonumber&u(t)=-\alpha w(1,t)-\beta \widehat{w}_t(1,t)+K^T\bigg[\widehat{X}(t)\\
		\nonumber&+L_3\widehat{w}(0,t)+L_4w(1,t)+\int_0^1 L_1(x)\widehat{w}(x,t)dx\\
		&+\int_0^1L_2(x)\widehat{w}_t(x,t)dx\bigg]+p_x(1,t)+\alpha p(1,t),
	\end{align}
	where the last two terms $p_x(1,t) + \alpha p(1,t)$ function to cancel the total disturbance. Accordingly, the closed-loop system under controller (\ref{controller}) takes the form
	\begin{align} \label{closedloopdisturbance}
		\left\{\begin{array}{l}
			\dot{X}(t)=AX(t)+B_1w(0,t)+B_2w_t(0,t)\\
			+B_3w(1,t)+B_4w_t(1,t), \; t>0, \\
			w_{tt}(x,t)=w_{xx}(x,t),\;\; x\in (0,1), \; t>0, \\
			w_x(0,t)=0, \; \;   t\ge 0,\\
			w_{x}(1,t)=-\alpha w(1,t)-\beta \widehat{w}_t(1,t)+K^T\bigg[\widehat{X}(t)\\
			+L_3\widehat{w}(0,t)+L_4w(1,t)
			+\int_0^1 L_1(x)\widehat{w}(x,t)dx\\
			+\int_0^1L_2(x)\widehat{w}_t(x,t)dx\bigg]+p_x(1,t)+\alpha p(1,t)\\
			+f(w(\cdot,t),w_t(\cdot,t))+d(t), \;\;  t\ge 0, \\
			\dot{\widehat{X}}(t)=A\widehat{X}(t)+B_1\widehat{w}(0,t)+B_2w_t(0,t)+B_3w(1,t)\\
			+B_4\widehat{w}_t(1,t)+H[C\widehat{X}(t)-CX(t)],\;\;  t\ge 0, \\
			\widehat{w}_{tt}(x,t)=\widehat{w}_{xx}(x,t),\;\; x\in (0,1), \; t>0, \\
			\widehat{w}_x(0,t)=k [\widehat{w}_t(0,t)-w_t(0,t)], \; \;   t\ge 0,\\
			\widehat{w}_x(1,t)=-\alpha \widehat{w}(1,t)-\beta \widehat{w}_t(1,t)\\
			+K^T\bigg[\widehat{X}(t)+L_3\widehat{w}(0,t)
			+L_4w(1,t)\\
			+\int_0^1 L_1(x)\widehat{w}(x,t)dx+\int_0^1L_2(x)\widehat{w}_t(x,t)dx\bigg],\\
			z_{tt}(x,t)=z_{xx}(x,t),\;\; x\in (0,1), \; t>0, \\
			z_x(0,t)=k [z_t(0,t)-w_t(0,t)], \; \;   t\ge 0,\\
			z_x(1,t)=-\alpha [z(1,t)-w(1,t)]-\alpha w(1,t)-\beta \widehat{w}_t(1,t)\\
			+\alpha p(1,t)+K^T\bigg[\widehat{X}(t)+L_3\widehat{w}(0,t)
			+L_4w(1,t)\\
			+\int_0^1 L_1(x)\widehat{w}(x,t)dx+\int_0^1L_2(x)\widehat{w}_t(x,t)dx\bigg]+p_x(1,t),\\
			p_{tt}(x,t)=p_{xx}(x,t),\;\; x\in (0,1), \; t>0, \\
			p_x(0,t)=-kp_t(0,t), p(1,t)=z(1,t)-w(1,t) \;\;  t\ge 0, \\
		\end{array}\right.
	\end{align}
	
	Observe that the transformation $I+\mathbb{P}$ defined in (\ref{IjiaP}) and (\ref{PP}) is invertible with inverse $I-\mathbb{P}$ on $\mathds{R}^n\times \mathbb{H}_1$. We denote $\big(\widehat{Y}(t),\widehat{w}(\cdot,t),\widehat{w}_t(\cdot,t)\big)=(I+\mathbb{P})\big(\widehat{X}(t),\widehat{w}(\cdot,t),\widehat{w}_t(\cdot,t)\big)$,
	thereby
	\begin{align}\label{XdaoY}
		\nonumber&\widehat{Y}(t)=\widehat{X}(t)+L_3\widehat{w}(0,t)+L_4\widehat{w}(1,t)\\
		&+\int_0^1 L_1(x)\widehat{w}(x,t)dx+\int_0^1L_2(x)\widehat{w}_t(x,t)dx.
	\end{align}
	We derive the $\widehat{Y}(t)$ and $\widehat{w}(x,t)$ cascaded system as follows
	\begin{align}\label{Yjianw11}
		\left\{
		\begin{array}{ll}
			\dot{\widehat{Y}}(t)=[A+L_2(1)K^T]\widehat{Y}(t)-B_2\widetilde{w}_t(0,t)\\
			-[L_2(1)K^TL_4+B_3]\widetilde{w}(1,t)+HC\widetilde{X}(t),\\
			\widehat{w}_{tt}(x,t)=\widehat{w}_{xx}(x,t),\\
			\widehat{w}_x(0,t)=k\widetilde{w}_t(0,t), \\
			\widehat{w}_x(1,t)=-\alpha \widehat{w}(1,t)-\beta \widehat{w}_t(1,t)\\
			+K^T\widehat{Y}(t)-K^TL_4\widetilde{w}(1,t).
		\end{array}
		\right.
	\end{align}
	\begin{theorem}\label{Yjianwjian}
		The system (\ref{Yjianw11}) is exponentially stable.
	\end{theorem}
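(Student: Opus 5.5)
The plan is to read (\ref{Yjianw11}) as a cascade: a Hurwitz ODE and an exponentially stable wave semigroup, driven by signals that Theorem \ref{wwan} already shows decay exponentially (in $L^2$ with exponential weight). I would not build a new Lyapunov functional for the coupled system. Instead I would combine the variation-of-constants formula with \cite[Lemma 2.1]{Zhou2018b}, exactly as in the proofs of Theorems \ref{state1} and \ref{wwan}.

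\textbf{Step 1: the $\widehat{Y}$-equation.} It is driven by $[A+L_2(1)K^T]\widehat{Y}$, which is Hurwitz by Theorem \ref{iandii}(iii), plus three forcing terms:
\begin{itemize}
\item $HC\widetilde{X}(t)$ and $[L_2(1)K^TL_4+B_3]\widetilde{w}(1,t)$ decay exponentially, by Theorem \ref{wwan} and the bound $\sqrt{\alpha}|\widetilde{w}(1,t)|\le\|(\widetilde{w},\widetilde{w}_t)\|_{\mathbb{H}_1}$;
\item $B_2\widetilde{w}_t(0,t)$ is only a boundary trace, so I need $e^{\gamma t}\widetilde{w}_t(0,t)\in L^2(0,\infty)$ for some $\gamma>0$.
\end{itemize}
For the trace, I would repeat the multiplier argument of (\ref{Jisuan})--(\ref{Jisuan22}) with $\rho(t)=\int_0^1(x-1)\widetilde{w}_t\widetilde{w}_x\,dx$, which produces the boundary terms at $x=0$. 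A simpler route also works: $\widetilde{w}_t(0,t)=k^{-1}\widetilde{w}_x(0,t)$, and the $\widetilde{\epsilon}$-system (\ref{epsilonwan}) is dissipative with $\frac{d}{dt}E_{\epsilon}=-k|\widetilde{\epsilon}_t(0,t)|^2$. This gives the weighted $L^2$ bound for $\widetilde{\epsilon}_t(0,\cdot)$. The corresponding bound for $\widehat{p}_t(0,\cdot)$ follows from $\widehat{p}$'s dissipation identity with the weight $e^{\gamma t}$, handled as in (\ref{Jisuan11}), using the exponential decay of the energies. Then $\widetilde{w}_t(0,t)=\widetilde{\epsilon}_t(0,t)-\widehat{p}_t(0,t)$ satisfies the weighted bound. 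Variation of constants, together with Cauchy--Schwarz for the convolution, then gives $|\widehat{Y}(t)|\le Me^{-\gamma' t}$.

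\textbf{Step 2: the $\widehat{w}$-subsystem.} I would write it abstractly as
\[
\tfrac{d}{dt}(\widehat{w},\widehat{w}_t)=\mathbb{A}_3(\widehat{w},\widehat{w}_t)+\mathbb{B}_{11}[K^T\widehat{Y}-K^TL_4\widetilde{w}(1,t)]+\mathbb{B}_{0}[k\widetilde{w}_t(0,t)],
\]
where:
\begin{itemize}
\item $\mathbb{A}_3$ is the wave operator with $f'(0)=0$ and $f'(1)=-\alpha f(1)-\beta g(1)$, i.e.\ $\mathbb{A}_1$;
\item $\mathbb{B}_0=(0,-\delta(x))$ is the boundary control operator at $x=0$.
\end{itemize}
$\mathbb{A}_1$ generates an exponentially stable semigroup, and $\mathbb{B}_{11}$ and $\mathbb{B}_0$ are admissible for it by \cite{Guo2008} (Neumann boundary input for the wave equation). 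The inputs satisfy the weighted $L^2$ bounds from Step 1. Hence \cite[Lemma 2.1]{Zhou2018b} gives exponential stability of $(\widehat{w},\widehat{w}_t)$ in $\mathbb{H}_1$, and with it well-posedness in $C(0,\infty;\mathbb{R}^n\times\mathbb{H}_1)$.

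\textbf{Main obstacle.} The delicate point is the trace $\widetilde{w}_t(0,t)$. It enters both the ODE and the boundary at $x=0$, and it is not controlled pointwise by the energy, only in weighted $L^2$. So the key estimate to secure is $\int_0^\infty e^{\gamma t}|\widetilde{w}_t(0,t)|^2dt<\infty$. My first attempt is the dissipation identities of (\ref{epsilonwan}) and of the $\widehat{p}$-system, multiplied by $e^{\gamma t}$ and integrated, with the terms $\gamma e^{\gamma t}E(t)$ absorbed using the known exponential decay rate and $\gamma$ chosen below that rate. Once this holds, everything else follows from the Hurwitz/admissibility machinery already invoked in Theorems \ref{state1} and \ref{wwan}.
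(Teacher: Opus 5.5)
Your proposal is correct and follows essentially the same route as the paper. The paper also uses the weighted dissipation identities for $\widetilde{\epsilon}$ and $\widehat{p}$ to get $e^{\gamma t}\widetilde{w}_t(0,t)\in L^2(0,\infty)$, combines this with the exponential decay of $\widetilde{X}(t)$ and $\widetilde{w}(1,t)$ from Theorem \ref{wwan} to obtain exponential stability of $\widehat{Y}$ via \cite[Lemma 2.1]{Zhou2018b}, and then handles the $\widehat{w}$-part through the admissibility of $\mathbb{B}_{11}$ and $\delta(x)$ for the exponentially stable semigroup $e^{\mathbb{A}_1t}$ and the same lemma.
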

	\begin{proof}\ \
		In view of the transformation (\ref{epsilonwan}) linking $\widehat{w}(x,t)$ to $\widetilde{\epsilon}(x,t)$, we consider the energy functionals $E(t)$ and $E_2(t)$ for $\widetilde{\epsilon}$ (satisfying (\ref{epsilonwan})) and $\widehat{p}$ (satisfying (\ref{perror})), defined respectively as $E(t) = \int_0^1 [|\widetilde{\epsilon}_x(x,t)|^2 + |\widetilde{\epsilon}_t(x,t)|^2] dx + \alpha |\widetilde{\epsilon}(1,t)|^2$ and $E_2(t) = \int_0^1 [|\widehat{p}_x(x,t)|^2 + |\widehat{p}_t(x,t)|^2] dx$. Then, for any $\gamma < \min{\omega_\mathbb{A}, \omega_{\mathbb{A}_2}}$, we obtain the time derivatives $\frac{d}{dt}[e^{2\gamma t}E(t)]=2\gamma e^{2\gamma t}E(t)-ke^{2\gamma t}|\widetilde{\epsilon}_t(0,t)|^2$ and
		$\frac{d}{dt}[e^{2\gamma t}E_2(t)]=2\gamma e^{2\gamma t}E_2(t)-k e^{2\gamma t}|\widehat{p}_t(0,t)|^2.$ This leads to
		\begin{align*}
			& k\int_0^{\infty}e^{2\gamma t}|\widetilde{\epsilon}_t(0,t)|^2dt= 2\gamma \int_0^\infty e^{2\gamma t}E(t)dt+E(0)\\
			&\leq 2\gamma M^2_{\mathbb{A}} \int_0^\infty e^{2\gamma t-2\omega_\mathbb{A}t}E(t)dt+E(0)\leq \frac{M^2_{\mathbb{A}}}{\omega_\mathbb{A}-\gamma}+E(0).
		\end{align*}
		and $k\int_0^{\infty}e^{2\gamma t}|\widehat{p}_t(0,t)|^2dt\leq \frac{M^2{\mathbb{A}_2}}{\omega_{\mathbb{A}_2}-\gamma}+E(0).$
		Consequently, $e^{\gamma t}\widetilde{w}_t(0,t) = e^{\gamma t}\widetilde{\epsilon}_t(0,t) - e^{\gamma t}\widehat{p}_t(0,t) \in L^2(0,\infty)$. According to Theorem \ref{wwan}, $|\widetilde{w}(1,t)|^2 \leq \frac{1}{\alpha} |(\widetilde{w}(\cdot,t), \widetilde{w}_t(\cdot,t))|^2_{\mathbb{H}_1} \leq \frac{M_1^2}{\alpha} e^{-2\gamma_1 t}$, and $\widetilde{X}(t)$ is exponentially stable. It then follows from \cite[Lemma 2.1]{Zhou2018b} that $\widehat{Y}(t)$ is exponentially stable. Moreover, since $\mathbb{B}_{11}$ and $\delta(x)$ are admissible for $e^{\mathbb{A}_1t}$ as shown in \cite{Guo2008}, the exponential stability of $(\widehat{w}(\cdot,t), \widehat{w}_t(\cdot,t))$ is also obtained via \cite[Lemma 2.1]{Zhou2018b}, which concludes the proof.
	\end{proof}
	
	\begin{theorem}\label{main}
		Suppose that $\sigma(A)\bigcap \sigma(\mathbb{A})=\varnothing$. If (i) or (ii) in Theorem \ref{iandii} holds and we choose $K$ such that $A+L_2(1)K^T$ is a Hurwitz matrix. Assume that $f:\mathbb{H}_1\rightarrow \mathds{R}$ is continuous.
		Suppose that the initial condition satisfies $\big(X(0),w(\cdot,0),w_t(\cdot,0),\widehat{X}(0),\widehat{w}(\cdot,0),\widehat{w}_t(\cdot,0),$
		$z(\cdot,0),z_t(\cdot,0),$
		$p(\cdot,0), p_t(\cdot,0)\big)\in (\mathds{R}^n\times \mathbb{H}_1)^2\times \mathbb{H}_1^2$ with $p(1,0)=$
		$z(1,0)-w(1,0)$.
		Then, there exists unique solution $\big(X(t),w(\cdot,t),w_t(\cdot,t),\widehat{X}(t),\widehat{w}(\cdot,t),\widehat{w}_t(\cdot,t),z(\cdot,t),z_t(\cdot,t),p(\cdot,t), $
		$p_t(\cdot,t)\big)\in C\big(0,\infty;(\mathds{R}^n\times \mathbb{H}_1)^2\times \mathbb{H}_1^2\big)$.
		Moreover, there exists two constants $\gamma_2,M_2>0$ such that
		\begin{align}\label{XXjianyoujie}
			\nonumber&\big\|\big(X(t),w(\cdot,t),w_t(\cdot,t),
			\widehat{X}(t),\widehat{w}(\cdot,t),\widehat{w}_t(\cdot,t)\big)\big\|_{(\mathds{R}^n\times \mathbb{H}_1)^2}\\
			&\leq M_2e^{-\gamma_2 t}, t\geq 0
		\end{align}
		and $\sup_{t\geq 0}\|(z(\cdot,t),z_t(\cdot,t),p(\cdot,t),p_t(\cdot,t))\|_{\mathbb{H}_1^2}<\infty.$
		
		If, in addition, $d(t)=0$ and $f(0,0)=0$, then $\lim_{t\rightarrow \infty}\|(z(\cdot,t),z_t(\cdot,t),p(\cdot,t),p_t(\cdot,t))\|_{\mathbb{H}_1^2}$
		$=0$.
	\end{theorem}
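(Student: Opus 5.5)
The plan is to change variables so that the closed-loop system (\ref{closedloopdisturbance}) becomes a cascade of subsystems already treated in Theorems \ref{wwan} and \ref{Yjianwjian}. The original states are then recovered through the bounded inverse $I-\mathbb{P}$. The new variables are
\[
\widetilde{X}=\widehat{X}-X,\qquad \widetilde{w}=\widehat{w}-w,\qquad \widehat{z}=z-w,\qquad \widehat{p}=p-\widehat{z},\qquad \widehat{Y}=\widehat{X}+\cdots \text{ as in (\ref{XdaoY})}.
\]
In these variables (\ref{closedloopdisturbance}) splits into three pieces:
\begin{itemize}
\item the linear error system (\ref{stateerror}) for $(\widetilde{X},\widetilde{w},\widehat{p})$, which is autonomous;
\item the system (\ref{Yjianw11}) for $(\widehat{Y},\widehat{w})$, driven only by $\widetilde{w}_t(0,\cdot)$, $\widetilde{w}(1,\cdot)$ and $\widetilde{X}$;
\item the $\widehat{z}$-equation in (\ref{perror}), driven by $F(t)$.
\end{itemize}
The control $u$ cancels in the first two pieces, so they are linear and independent of $f$.

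\textbf{Well-posedness.} The error system and (\ref{Yjianw11}) are linear. Theorem \ref{wwan} gives a unique solution of the error system, and Theorem \ref{Yjianwjian} does the same for (\ref{Yjianw11}), using admissibility of $\mathbb{B}_{11}$ and $\delta(x)$ for $e^{\mathbb{A}_1 t}$ from \cite{Guo2008}. The compatibility condition $p(1,0)=z(1,0)-w(1,0)$ gives $\widehat{p}(\cdot,0)\in H^1_E$, so the initial data are admissible. We then set
\[
(\widehat{X},\widehat{w},\widehat{w}_t)=(I-\mathbb{P})(\widehat{Y},\widehat{w},\widehat{w}_t),\qquad X=\widehat{X}-\widetilde{X},\qquad w=\widehat{w}-\widetilde{w}.
\]
So $(X,w,w_t)$ is a continuous $\mathds{R}^n\times\mathbb{H}_1$-valued function determined without reference to $f$. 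Consequently $F(t)=f(w(\cdot,t),w_t(\cdot,t))+d(t)$ is a known continuous function of time, given continuity of $f$ and, I assume, $d\in L^\infty_{loc}$ or $L^2_{loc}$. The $\widehat{z}$-equation is then a linear wave equation with boundary input $-\mathbb{B}F$. Since $\mathbb{B}$ is admissible for $e^{\mathbb{A}t}$ \cite{Wei2020}, it has a unique solution in $C(0,\infty;\mathbb{H}_1)$. Finally we recover $z=\widehat{z}+w$ and $p=\widehat{p}+\widehat{z}$. Uniqueness follows because every step is forced.

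\textbf{Exponential estimate (\ref{XXjianyoujie}).} Theorem \ref{wwan} gives exponential decay of $(\widetilde{X},\widetilde{w},\widetilde{w}_t)$, and Theorem \ref{Yjianwjian} gives exponential decay of $(\widehat{Y},\widehat{w},\widehat{w}_t)$. Boundedness of $I-\mathbb{P}$ transfers this decay to $(\widehat{X},\widehat{w},\widehat{w}_t)$, and taking differences transfers it to $(X,w,w_t)$. The hypothesis $\sigma(A)\cap\sigma(\mathbb{A})=\varnothing$ is not needed for this part. The structure of $L_j$ from Theorem \ref{iandii} is the one already assumed.

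\textbf{Boundedness and convergence of $(z,p)$.} This is the delicate step. We have
\[
(\widehat{z},\widehat{z}_t)(t)=e^{\mathbb{A}t}(\widehat{z},\widehat{z}_t)(0)-\int_0^t e^{\mathbb{A}(t-s)}\mathbb{B}F(s)\,ds.
\]
Admissibility together with exponential stability of $e^{\mathbb{A}t}$ bounds the integral term by $C\|F\|_{L^\infty(0,\infty)}$.
\begin{itemize}
\item Boundedness of $F$: $(w,w_t)\to 0$ in $\mathbb{H}_1$ by the previous step, so the trajectory lies in a compact set together with its limit $0$. A continuous $f$ is bounded on such a set, hence $f(w,w_t)$ is bounded. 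This needs $d\in L^\infty$, which I would take as the standing assumption on the disturbance.
\item Bounds on $z$ and $p$: it follows that $\widehat{z}$ is bounded in $\mathbb{H}_1$. Then $z=\widehat{z}+w$ is bounded, and $p=\widehat{p}+\widehat{z}$ is bounded because $\widehat{p}$ decays in $\mathbb{H}_2$, whose $H^1_E$-norm dominates the $\mathbb{H}_1$-norm.
\item Convergence when $d=0$ and $f(0,0)=0$: continuity of $f$ gives $F(t)\to 0$. I would split the Duhamel integral at $t/2$. The part over $[t/2,t]$ is bounded by $C\sup_{s\ge t/2}|F(s)|$. The part over $[0,t/2]$ is bounded by $Ce^{-\omega_{\mathbb{A}}t/2}\|F\|_\infty$, using the semigroup property and admissibility. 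Both tend to zero, so $\widehat{z}\to 0$ and hence $(z,p)\to 0$.
\end{itemize}

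The main obstacle I anticipate is this last step, namely making the admissibility-based input-to-state estimate rigorous for a merely $L^\infty$ input and checking that the norms of $\mathbb{H}_1$ and $\mathbb{H}_2$ are compatible for $p$.
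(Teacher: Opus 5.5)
Your proposal is correct and follows the paper's proof. Like the paper, you obtain exponential decay of $(\widetilde{X},\widetilde{w},\widehat{Y},\widehat{w})$ from Theorems \ref{wwan} and \ref{Yjianwjian}, pull it back to $(\widehat{X},X,w)$ through $I-\mathbb{P}$, bound $f(w(\cdot,t),w_t(\cdot,t))$ by continuity, treat the $\widehat{z}$-subsystem of (\ref{perror}) via admissibility of $\mathbb{B}$ for the exponentially stable $e^{\mathbb{A}t}$, and reconstruct $z=\widehat{z}+w$, $p=\widehat{p}+\widehat{z}$. Your compactness argument for boundedness of $f$ along the trajectory, the explicit assumption $d\in L^\infty(0,\infty)$, and the Duhamel splitting at $t/2$ make rigorous steps the paper only asserts, and they correctly give boundedness of $\widehat{z}$ rather than the exponential stability the paper's proof claims.
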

	
	\begin{proof}\ \
		By Theorems \ref{Yjianwjian} and \ref{wwan}, we conclude that the system
		$(\widetilde{X}(t),\widetilde{w}(\cdot,t),\widetilde{w}_t(\cdot,t),\widehat{Y}(t),$
		$\widehat{w}(\cdot,t),\widehat{w}_t(\cdot,t))$
		uniquely exists in $C(0,\infty;(\mathds{R}^n\times\mathbb{H}_1)^2)$, is continuous, and exhibits exponential stability.
		Since $I-\mathbb{P}$ is invertible, it follows that $\widehat{X}(t)$ also uniquely exists in $C(0,\infty;\mathds{R}^n)$ and is exponentially stable.
		Consequently, $(X(t),w(\cdot,t),w_t(\cdot,t))$ uniquely belongs to $C(0,\infty;\mathds{R}^n\times \mathbb{H}_1)$ and is exponentially stable.
		Combining this with the continuity of $f$, we deduce that $f(w(\cdot,t),w_t(\cdot,t))$ is bounded.
		Then, applying \cite[Lemma 2.1]{Zhou2018b}, the admissibility of $\mathbb{B}$ for $e^{\mathbb{A}t}$, and the $\widehat{z}$-component of system (\ref{perror}),
		we establish the existence, uniqueness, continuity, and exponential stability of $(\widehat{z}(\cdot,t),\widehat{z}_t(\cdot,t))$.
		Together with the $\widehat{p}$-component of (\ref{perror}), this implies that
		$(z(\cdot,t),z_t(\cdot,t))$ and $(p(\cdot,t),p_t(\cdot,t))$ also uniquely exist, are continuous, and are exponentially stable.
		
		If, in addition, $d(t) = 0$ and $f(0,0) = 0$, then $f(w(\cdot,t),w_t(\cdot,t)) \to 0$ as $t \to \infty$,
		which implies $(\widehat{z}(\cdot,t),\widehat{z}_t(\cdot,t)) \to 0$.
		Therefore,
		$$\lim_{t\rightarrow \infty}\|(z(\cdot,t),z_t(\cdot,t),p(\cdot,t),p_t(\cdot,t))\|_{\mathbb{H}_1^2}=0,$$
		which completes the proof.
	\end{proof}

	\begin{remark}\rm
		For the ODE-wave equation system subject to boundary disturbances, the studies cited as \cite{Liu2017} and \cite{Zhou2016a} only account for external disturbances, while internal uncertainties are not considered. In contrast, our work addresses both internal uncertainties and external disturbances.
		Additionally, while \cite{Liu2017} and \cite{Zhou2016a} incorporate only a single interconnection term $w(0,t)$, our formulation includes four interconnections: $w(0,t), w_t(0,t), w(1,t)$, and $w_t(1,t)$.
		In terms of methodology, \cite{Liu2017} employs sliding mode control (SMC) with full-state feedback, whereas \cite{Zhou2016a} utilizes a sliding observer and requires four output measurements: $w_t(0,t), w(1,t), w_t(0,t)$, and $CX(t)$. However, the resulting state estimation is discontinuous due to the nature of the sliding observer.
		As established in Theorem \ref{main}, only three output measurements $w_t(0,t), w(1,t),$ and $CX(t)$ suffice for the design of the disturbance estimator, state observer, and controller in our system (\ref{beem}).
	\end{remark}

\section{Concluding remarks}
In this paper, the exponential stabilization problem via output feedback is investigated for a cascade system composed of an ODE and a wave PDE with four interconnection points. In the absence of external disturbances, a novel transformation is introduced to incorporate the boundary control input into the ODE part. Based on this transformation, a state feedback controller is designed to achieve exponential stability of the system. When both internal uncertainties and external disturbances are considered, an infinite-dimensional disturbance estimator without high-gain is constructed to estimate the total disturbance. Using this estimator, a state observer and an observer-based output feedback controller are developed. It is rigorously shown that the original system and the state observer are exponentially stable, while the overall closed-loop system remains bounded.

\end{document}